\numberwithin{equation}{section}
\DeclareMathOperator{\Jac}{Jac}
\DeclareMathOperator{\diver}{div}
\DeclareMathOperator{\dist}{dist}
\def\div{\mathop{\operatorname{div}}}
\newcommand{\ms}{\medskip}
\newcommand{\R}{\mathbb R}
\newcommand{\bp}{\noindent {\em Proof: }}
\newcommand{\ep}{\hfill $\square$ \medskip}
\newcommand{\wt}{\widetilde}
\newcommand{\A}{\mathcal A}
\newcommand{\B}{\mathcal B}
\newcommand{\C}{\mathcal C}
\theoremstyle{plain}
\newtheorem{theorem}[equation]{Theorem}
\newtheorem{corollary}[equation]{Corollary}
\newtheorem{proposition}[equation]{Proposition}
\newtheorem{definition}[equation]{Definition}
\theoremstyle{definition}
\theoremstyle{remark}
\newtheorem{remark}[equation]{Remark}
\begin{document}

\title{A change of variable for Dahlberg-Kenig-Pipher operators.}

\author[Feneuil]{Joseph Feneuil}
\address{Joseph Feneuil. Universit\'e Paris-Saclay, Laboratoire de math\'ematiques d’Orsay, 91405, Orsay, France}
\email{joseph.feneuil@universite-paris-saclay.fr}
\thanks{This article was written during the author's stay at the Universit\'e Paris-Saclay in France, where he was supported by the Simons Foundation grant 601941, GD}

\maketitle

\begin{abstract} 
In the present article, we give a method to deal with Dahlberg-Kenig-Pipher (DPK) operators in boundary value problems on the upper half plane.
 
We give a nice subclass of the weak DKP operators that generates the full class of weak DKP operators under the action of bi-Lipschitz changes of variable on $\R^n_+$ that fix the boundary $\R^{n-1}$. Therefore, if one wants to prove a property on DKP operators which is stable by bi-Lipschitz transformations, one can directly assume that the operator belongs to the subclass. 
Our method gives an alternative proof to some past results and self-improves others beyond the existing literature.
\end{abstract}

\ms\noindent{\bf Key words:} Boundary value problems, Elliptic operators with rough coefficients, Dahlberg-Kenig-Pipher operators, Carleson perturbations.

\ms\noindent
AMS classification:  35J25.

\tableofcontents

\section{Introduction}
\label{S1}

In the Euclidean setting, boundary value problems for elliptic operators have been studied for ages by many researchers, who worked to find the relationship between the smoothness of the boundary data, the geometry of the boundary, and the smoothness of the coefficients of the elliptic operator under consideration.

Let us skip the early literature on the cases where the boundary datum is continuous or where the ambient space is the complex plane. In the higher dimensional Euclidean setting, the Dirichlet value problem in $L^p$ was studied for the Laplacian and rough boundaries in for instance \cite{Dahlberg77}, \cite{BJ}, \cite{DJ}, \cite{KenigB}, \cite{HM14}, \cite{HMUT14}, \cite{AHM3TV}, \cite{AHMNT}, \cite{AHMMT}, \cite{Azz}. When the operator is not the Laplacian, negative results on the solvability of the Dirichlet problem in $L^p$ are in \cite{CFK}, \cite{MM}, and positive results are for either $t$-independent operators on $\R^{n}_+:= \{(x,t)\in \R^{n-1} \times (0,\infty)\}$ (see for instance \cite{JK}, \cite{KKPT}, \cite{HKMP15}), or for operators that verify some conditions described in terms of Carleson measures (see \cite{FKP}, \cite{KP93}, \cite{KePiDrift}, \cite{DPP07}, \cite{DP}). Of course, it would be impossible to mention all the articles that could be linked to the topic, in particular knowing that the present article is meant to be brief, hence the aforementioned papers are only a small sample of the considerable work in the research area. We refer to the introduction of \cite{DFMKenig} and \cite{FP} for a better description of the current state of the art.

The operators that fall into the scope of the present article are in the second form, i.e. related to Carleson measures. The Dahlberg-Kenig-Pipher operators - or DKP operators for short - are roughly the uniformly elliptic operators in the form $L=-\diver \B \nabla$ on a domain $\Omega$ for which
 \begin{equation} \label{defDKP}
  \dist(X,\partial \Omega) |\nabla \B|^2 dX \text{ is a Carleson measure on $\Omega$.}
 \end{equation}
More precise definitions will be given later. For the DKP operators (and sufficiently smooth boundaries), the Dirichlet problem is solvable in $L^p$ for sufficiently large $p\in (1,\infty)$. This assertion was claimed by Dahlberg and proved by Kenig and Pipher in \cite{KePiDrift}, hence the name `DKP'. Kenig's and Pipher's proof is based on the following simple yet powerful observation. Consider a matrix $\B$ such that $\nabla \B$ is at least in $L^1_{loc}$. We write $\B$ as a matrix by blocs
\begin{equation} \label{defBi}
\B := \begin{pmatrix} \B_1 & \B_2 \\ \B_3 & b \end{pmatrix},
\end{equation}
where $b$ is a scalar function on $\Omega$, and so $\B_1$ is a matrix function of order $n-1$, and $\B_2$ and $\B_3$ are respectively vertical and horizontal vector functions of length $n-1$. We construct $\wt \B$ and $\mathcal T$ as
\[\wt \B := \begin{pmatrix} b^{-1} \B_1 & b^{-1}(\B_2+ \B_3^T) \\ 0 & 1 \end{pmatrix} \quad \text{ and } \quad \mathcal T := \begin{pmatrix} 0 & \B_3^T \\ -\B_3 & 0 \end{pmatrix},\]
and we notice that we have the formal computation
\begin{equation} \label{BtoBD} \begin{split}
-\div \B \nabla & = - b \div\Big( \frac\B b \nabla \Big) - \frac{\nabla b}{b} \cdot \B \nabla \\
& = - b \div\Big( \frac{\B + \mathcal T}{b} \nabla \Big) + b \div\Big(\frac{\mathcal T}{b} \nabla \Big) - \frac{\nabla b}{b} \cdot \B \nabla \\
& = - b \div\Big( \wt \B \, \nabla \Big) + b \underbrace{\left[ \div\Big(\frac{\mathcal T}{b}\Big) - \B^T \frac{\nabla b}{b^2} \right]}_{\mathcal D} \cdot \nabla, 
\end{split} \end{equation}
where the divergence of a matrix $\A$ is a vector containing the divergence of the columns of $\A$. If $\B$ is uniformly elliptic - see \eqref{defellip} below - and if $u$ is a weak solution to $-\div \B \nabla u = 0$, then $u$ is also a weak solution to the elliptic operator with drift  $(-\div \wt \B \nabla + \mathcal D \cdot \nabla )u = 0$.
Following \cite{KePiDrift}, this transformation has been used by numerous authors  to turn an elliptic operator into another elliptic operator with (for instance) a nice last line, at the price of adding a drift (see for instance \cite{DP}, \cite{DPP07}, \cite{DPR}, \cite{FMZ}). 
The same trick can be used to get 0 in the top right corner of $\B$ (instead of the bottom right corner) or to turn $\B$ into a symmetric matrix, always at the price of adding a drift.
The identity \eqref{BtoBD} is so easy that it may not even be discussed by the authors that used it, and may often be only a couple of lines in the deepest part of the proofs. But we did not know any way to avoid it, that is, before the present article, we were not aware of any method that allows people to avoid drifts when dealing with DKP operators. The apparently innocent observation \eqref{BtoBD}, which allows us to trivialize one line (or column) of the matrix $\B$, has been crucial in many proofs. However, it is obviously limited to cases where we can handle drifts.

\medskip

In this paper, we purpose an alternative method to the transformation \eqref{BtoBD}, which similarly allows us to assume that one line of $\B$ is simple, and the price to pay this time is a ``Carleson perturbation''. Carleson perturbations are a class of perturbations on the matrix $\A$ of a uniformly elliptic operator $L=\div \A \nabla$ which will preserve the solvability of the Dirichlet problem in $L^p$ for large $p$ (\cite{FKP}), and they are even optimal in some sense (see \cite{FKP} again). Morally speaking, Carleson perturbations are objects that fit very well in the theory of boundary value problems with data in $L^p$, and many results in the recent literature include them.
So we are saying that we purpose here an alternative method that trivializes a line of $\B$ by adding a term which is often free (because already included in the theory, because natural in the theory).
Of course, we do not say that the argument presented below is strictly better than using \eqref{BtoBD}. Both have pros and cons that we shall discuss later. Our aim is really to introduce the reader to a substitute method, and to show that this other method can be used to self improve some existing results in situations where using \eqref{BtoBD} does not work. 

\medskip

We shall present our method in a fairly simple case. Yet, it is meant to be adapted to more complicated scenarios, some of them will be discussed, but of course we cannot anticipate all of them. Our main choice towards simplification is that our domain will always be the upper half plane $\R^{n}_+$. In this context, we define the Carleson measures and the Carleson measure condition as follows.

\begin{definition}
We say that a quantity $f$ (a scalar, vector, or matrix function) defined on $\R^n_+$ satisfies the Carleson measure condition if $|f|^2 \frac{dt}{t}dx$ is a Carleson measure, that is if for any $x\in \R^{d}$ and any $r>0$, 
\begin{equation} \label{defCM}
\int_{B(x,r)} \int_{0}^r |f(y,t)|^2 \, \frac{dt}{t} \, dy \leq Mr^{d}.
\end{equation}
We write $f\in CM$ - or $f\in CM(M)$ when we want to refer to the constant in \eqref{defCM} - when $f$ satisfies the Carleson measure condition.

In addition, we write that $f\in CM_{sup}$ if $f_{sup} \in CM$, where $f_{sup}$ is the function on $\R^{d+1}_+$ defined as
\[f_{sup}(y,t):=\sup\{|f(z,s)|, \, s\in (t/2,2t),\, z\in B(y,t)\}.\]
Note that $f\in CM_{sup}$ is equivalent to $f_{sup} \in CM_{sup}$, and implies that $f\in CM$ and $f\in L^\infty(\R^n_+)$.
\end{definition}

\medskip

Let us turn to the operators under consideration. An operator $L:=-\diver \A \nabla$ is said to be uniformly elliptic if there exists $C_A$ such that 
\begin{equation} \label{defellip}
\A(x,t) \xi \cdot \xi \geq (C_A)^{-1} |\xi|^2 \text{ and } |\A(x,t) \xi \cdot \zeta| \leq C_A |\xi||\zeta| \qquad \text{ for } (x,t) \in R^n_+, \, \xi,\zeta \in \R^n.
\end{equation}

The Dahlberg-Kenig-Pipher operators are the uniformly elliptic operators on $\R^n_+$ for which $|t\nabla \A| \in CM_{sup}$ or $|t\nabla \A| \in CM \cap L^\infty$, depending on the context. Recall that the solvability of the Dirichlet problem in $L^p$ is stable under Carleson perturbation on the coefficients (\cite{FKP}), therefore it means that a larger class of DKP-type operators on $\R^n_+$ can be given by assuming that the matrix $\A$ can be decomposed as $\A = \B + \C$ with $|t\nabla \B| + |\C| \in CM_{sup}$.

\bigskip

Our main result is given as follows.

\begin{proposition} \label{Main2} Let $L:=-\div \A \nabla$ be a uniformly elliptic operator on $\R^n_+$ such that $\A$ can be decomposed as $\A = \B + \C$ where $|t\nabla \B| + |\C| \in CM_{sup}(M)$. Then there exists a bi-Lipschitz change of variable $\rho$ that maps $\R^n_+$ to itself and fixes the boundary $\R^{n-1}$ such that the conjugate operator $L_\rho:= -\div \A_\rho \nabla$ verifies that $\A_\rho$ can be decomposed as $\A_\rho =\B_\rho + \C_\rho$ where $\B_\rho$ has the form
\begin{equation}
\B_\rho = \begin{pmatrix} \B_{1,\rho} & \B_{2,\rho} \\ 0 & 1 \end{pmatrix}
\end{equation}
and $t|\nabla \B_\rho|+ |\C_\rho| \in CM_{sup}(M')$. We can give a bound on $M'/M$ and on the Lipschitz constants in $\rho$ and  that depends only on $n$ and the ellipticity constant $C_A$.
\end{proposition}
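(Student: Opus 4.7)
My plan is to take inspiration from the constant-coefficient case: if $\B = \B_0$ were constant, then the affine map $\rho_0(y,s) = (y+s\B_{3,0}^T/b_0,\,sb_0)$ conjugates $\B_0$ into a matrix whose bottom row is exactly $(0,\ldots,0,1)$, as a direct computation shows. For general $\B$, I would construct $\rho$ by ``Whitney-smoothing'' this recipe at each vertical scale. Fix a non-negative smooth bump $\phi$ on $\R^{n-1}$ with $\int\phi = 1$, set $\phi_s(y) = s^{-(n-1)}\phi(y/s)$, and define $\rho(y,s) = (y + \vec v(y,s),\, h(y,s))$ via specific antiderivatives in $s$ of scale-$s$ convolutions of $\B_3^T/b$ and $b$, chosen so that at each scale $s$ the map $\rho$ looks locally like the constant-case $\rho_0$ applied to the scale-$s$ averages of the coefficients. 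The ellipticity bounds $b \ge C_A^{-1}$ and $|\B|\le C_A$ are used to ensure that $\partial_s h$ stays bounded below away from zero and that $D\rho$ has bounded entries and bounded inverse, so $\rho$ is bi-Lipschitz fixing the boundary.

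Once $\rho$ is fixed, I would compute $\A_\rho = |J_\rho|\,(D\rho)^{-1}(\A\circ\rho)(D\rho)^{-T}$ via a block Schur-complement calculation, define $\B_\rho$ to be $\A_\rho$ with its bottom row replaced by $(0,\ldots,0,1)$, and set $\C_\rho := \A_\rho - \B_\rho$. Because of the chosen form of $\rho$, the bottom-left of $\A_\rho$ reduces (up to bounded-coefficient factors and contributions from $\C$) to $\B_3\circ\rho - (b\circ\rho)\partial_s\vec v^T$, and the bottom-right to a product involving $(b\circ\rho)/\partial_s h$; by construction, both differ from the target values $0$ and $1$ only by deviations of the form $f - \phi_s\ast_y f$ for $f \in \{\B_3^T/b,\, b\}$, plus $\C$-contributions already in $CM_{sup}$.

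The verification then rests on two standard ingredients that should be available from the earlier development: (i) if $|t\nabla f|\in CM_{sup}$ then $|f - \phi_t\ast_y f|\in CM_{sup}$ with comparable constant (a Littlewood--Paley/Poincar\'e-type smoothing estimate), and (ii) the class $CM_{sup}$ is stable under the matrix conjugation $g \mapsto |J_\rho|\,(D\rho)^{-1}(g\circ\rho)(D\rho)^{-T}$ whenever $\rho$ is bi-Lipschitz and fixes the boundary. Applying (i) to $f\in\{\B_3^T/b,\,b\}$ shows $|\C_\rho|\in CM_{sup}$ with controlled constant, while (ii) applied to the decomposition $\A=\B+\C$, combined with the gradient analogue of (i) for the mollified factors entering $\nabla|J_\rho|$ and $\nabla(D\rho)^{\pm 1}$, yields $|t\nabla\B_\rho|\in CM_{sup}$, each with constant $\lesssim M$. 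The main technical obstacle I anticipate is maintaining the bi-Lipschitz constants \emph{independent} of the Carleson norm $M$: naive choices like $\vec v(y,s) = \int_0^s \phi_\sigma\ast_y(\B_3^T/b)(y,\sigma)\,d\sigma$ control $|\nabla_y\vec v|$ only by $\||t\nabla(\B_3^T/b)|\|_{L^\infty}\sim M^{1/2}$, so the construction must exploit some additional cancellation or be recast (for instance as a purely vertical change of variable $\rho(y,s)=(y,h(y,s))$ tuned to the Schur-complement quantities $\A_3\A_1^{-1}$ and $a-\A_3\A_1^{-1}\A_2$, which are bounded by $C_A$) to achieve an $M$-independent bi-Lipschitz estimate while preserving the Carleson cancellation on the bottom row.
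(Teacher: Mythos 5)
You have correctly identified the central obstacle---the bi-Lipschitz constants must not depend on the Carleson norm $M$---but your proposal stops short of resolving it, and the resolution is in fact the heart of the argument. The paper handles it with two ideas, neither of which appears in your sketch. First (Step~2 of the paper's proof), one \emph{pre-mollifies} the coefficients: replace $\B$ by $\wt\B(y,t) := \iint \A(z,s)\,\theta_{y,t}(z,s)\,dz\,ds$ where $\theta_{y,t}$ is a scale-$t$ bump. Differentiating $\wt\B$ hits the bump rather than $\A$, so $\|t\nabla\wt\B\|_\infty \lesssim \|\A\|_\infty \lesssim C_A$, a bound depending \emph{only} on the ellipticity constant and not on $M$; meanwhile an integration by parts shows $|t\nabla\wt\B|$ still satisfies $CM_{\sup}$ with constant $\lesssim M$, and the new error $\wt\C := \A - \wt\B$ is controlled by $\sup_{B(y,t)\times(t/2,2t)}(|t\nabla\B|+|\C|)$, hence also in $CM_{\sup}(CM)$. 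Second (Step~3), even with $\|t\nabla\B\|_\infty \lesssim C_A$, one cannot in general satisfy the smallness condition needed for a single $\rho_{v,h}$ to be bi-Lipschitz; instead one iterates $N$ changes of variable with $h = b^{1/N}$ and $v_k = \tfrac1N b^{-k/N}B_3$, so each step moves by $O(1/N)$, and $N$ can be chosen depending only on $C_A$ and $n$.

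Your proposed alternative has concrete problems beyond the acknowledged gap. The formula $\vec v(y,s)=\int_0^s \phi_\sigma\ast_y(\B_3^T/b)(y,\sigma)\,d\sigma$ does not even converge at the level of gradients: $\nabla_y[\phi_\sigma\ast_y f] = \sigma^{-1}(\nabla\phi)_\sigma\ast_y f$ is $O(\sigma^{-1})$, so $\int_0^s \sigma^{-1}\,d\sigma$ diverges logarithmically near $0$; one needs the change of variable to have the dilation-covariant form $\rho(y,t)=(y+tv(y,t),\,h(y,t)t)$ rather than an antiderivative in $s$. The suggested fall-back of a ``purely vertical'' map $\rho(y,s)=(y,h(y,s))$ cannot kill a nonzero $B_3$: inspecting equation~\eqref{Arho}, the bottom-left block after the change of variable is $B_3 - bh^{-1}v$, so without the horizontal shear $v$ the off-diagonal term survives. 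The Schur-complement reformulation does not repair this; the obstruction is geometric (where the Jacobian can move mass), not algebraic. I would encourage you to keep your ingredient~(i) (the Littlewood--Paley smoothing estimate), which is essentially the paper's Step~2 in disguise, but reorganize it as a \emph{preprocessing} of the coefficient matrix rather than a smoothing of the change of variable, and then add the $N$-fold iteration with $h = b^{1/N}$, $v_k = \tfrac1N b^{-k/N}B_3$.
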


In the proposition, by {\em conjugate operator}, we mean the operator $L_\rho$ for which $v := u\circ \rho$ is a weak solution to $L_\rho v = 0$ whenever $u$ is a weak solution to $Lu=0$. 

\medskip

\noindent{ \em Remarks:}
\begin{enumerate}[(i)]
\item We insist on the fact that the aim of our paper is to present the method, and not to give the most general result possible. In future articles, we intend to adapt the argument in other contexts by explaining the differences with the proof given here (instead of repeating all the details).
\item The precise type of conditions on $\B$ and $\C$ do not really matter, as long as they are stable by bi-Lipschitz transformation. For instance, assuming $|t\nabla \B| + |\C| \in CM$ (instead of $CM_{sup}$) would give the analogous conclusion where $t|\nabla \B_\rho|+ |\C_\rho| \in CM$ (instead of $CM_{sup}$). 
\item We even only need a Carleson condition on the last line of $\B$ to get the simplification of the last line of $\B_\rho$ (but if we only assume the Carleson condition on the last line of $\B$, we cannot conclude anymore that $|t||\nabla \B_\rho| \in CM_{sup}$).
\item With the same technique, we could find $\rho$ so that $\B_\rho$ is zero in the top right corner (instead of the bottom left one). And if $\B$ is symmetric, then $\B_\rho$ would be diagonal by blocs.
\item The proof can immediately be adapted  to operators on $\R^n \setminus \R^d := \{(x,t)\in \R^d \times \R^{n-d}, \, t\neq 0\}$ ($d<n-1$) in the form $L=-\diver [|t|^{d+1-n} \A \nabla]$ with $\A$ as in \eqref{defellip}. Here $\R^n\setminus \R^d$ is  the analogue of $\R^n_+$ in domains with higher codimensional boundaries. The elliptic theory associated to such operators was studied in \cite{DFMprelim} (see also \cite{FKS}) and the solvability of the Dirichlet problem was studied under various situations in for instance \cite{DFMAinfty}, \cite{FMZ}, \cite{FenAinfty}, \cite{DM}, \cite{MP}, \cite{FenGinfty}. 
We really hesitated to include this case in the proposition, but we decided that it was not worth complicating the argument further. A quick discussion about the differences is given in the proof of Theorem \ref{DpTh} below. 
\item For elliptic operators with complex coefficients (see \cite{DP}, \cite{FMZ}), our method allows us to remove the real part of the last line of $\B$, but not the imaginary part. Yet using \eqref{BtoBD} yields the same reduction and the same limitation.
\item Unfortunately, contrary to the transformation \eqref{BtoBD}, our method does not seem easily adaptable to systems of elliptic equations. 
\item On the other hand, Proposition \ref{Main2} is well adapted to the regularity problem and the Neumann problem, where a theory on Carleson perturbations exists (see \cite{KP95}). We give an application of Proposition \ref{Main2} in Corollary \ref{RpCor} below.
\item The method is dependent on the invariance of the domain by tangential translations and by dilatations. However, there are many ways to reduce the domain to $\R^n_+$. For instance, in a paper under preparation \cite{FLM}, we will use a variant of the method to prove a result on uniformly rectifiable domains.
\item To the best of the author's knowledge, the only time where a change of variable has been used to simplify the coefficients of an elliptic operator is in \cite{HKMP15}. But the context of \cite{HKMP15} ($t$-independent operators) is different from our context. 
\end{enumerate}

In the rest of the article, we prove Proposition \ref{Main2}, and we give two examples where the method extends the results from the existing literature (Corollary \ref{RpCor} and Theorem \ref{DpTh}). 

The notation $A\lesssim B$ is employed when $A \leq C B$ for a constant that depends only on parameters that will be either recalled or obvious from context. We shall also use $A\approx B$ when $A \lesssim B$ and $A \gtrsim B$.

\section{Proof of Proposition \ref{Main2}}

Let $\A$, $\B$ and $\C$ be like in the assumptions of the proposition, that is $\A$ is uniformly elliptic with a constant $C_A$,  $\A := \B + \C$, and $|t\nabla \B| + |\C| \in CM_{sup}(M)$.

\medskip

\noindent {\bf Step 1: Construction of $\rho_{v,h}$.}  For a positive scalar function $h$ and a vector function $v$, we define
\begin{equation} \label{defrho12}
\rho(y,t) = \rho_{v,h} (y,t) := (y + tv(y,t), h(y,t)t), 
\end{equation}
which fixes the boundary $\R^{n-1}$ and which is a Lipschitz map whenever $v$, $h$, $|t\nabla v|$, and $|t\nabla h|$ are uniformly bounded. Its Jacobian is
\begin{equation} \label{defJacrho}
Jac_\rho = \begin{pmatrix} I + t\nabla_x v  & t\nabla_x h \\ v + t\partial_t v & h +t\partial_t h \end{pmatrix}.
\end{equation}
Let $J_\rho$ be the matrix function
\begin{equation} \label{defJrho}
J_\rho := \begin{pmatrix} I & 0 \\ v & h \end{pmatrix}.
\end{equation}
Since $|Jac_\rho - J_\rho| \lesssim |t\nabla h| + |t\nabla v|$, we can find $\epsilon_0>0$ small enough and $C_0$ large enough (both depending on $d$, $n$ and $\|h^{-1}\|_\infty$) so that, if 
\begin{equation} \label{condh}
\|t\nabla h\|_\infty +  \|t\nabla v\|_\infty < \epsilon_0,
\end{equation}
then for any  $(y,t) \in \Omega_0$,  $Jac_\rho(y,t)$ is invertible,
\begin{equation} \label{condh2}
 |\det(Jac_\rho(y,t))-h(y,t)| \leq (2\|h^{-1}\|_\infty)^{-1}, 
\end{equation}
and
\begin{equation} \label{condh3}
 |(Jac_\rho)^{-1}(y,t) - (J_\rho)^{-1}(y,t)| \leq C_0 (|t\nabla h(y,t)| + |t\nabla v(y,t)|).
\end{equation}

If $h$ and $v$ satisfy \eqref{condh}, which we now assume, then $\rho$ is a bi-Lipschitz change of variable. So we can look for the conjugate of the operator $L=-\diver \A \nabla$ by $\rho$. We let the reader check that if $u$ is a weak solution to $L=-\diver \mathcal A \nabla$, then $u\circ \rho$ is solution to $L_{\rho} = -\diver \A_\rho \nabla$ where 
\begin{equation}
\A_{\rho} = \det(Jac_\rho) (Jac_\rho)^{-T} (\mathcal A \circ \rho) \, (Jac_\rho)^{-1}.
\end{equation}
To lighten the notation, we use $\mathcal O(f)$ to denote any scalar/vector/matrix function on $\Omega$ which can be bounded by $Cf$ (where $C$ depends only on $d$, $n$, $\|h^{-1}\|_\infty$ and later $C_A$). In particular, the quantity $\mathcal O(f)$ can change from one line to another. 
We use this notation to rewrite \eqref{condh3} as
\[(Jac_\rho)^{-1} = (J_\rho)^{-1} + \mathcal O(|t\nabla h| + |t\nabla v|) \quad \text{ and } \quad (Jac_\rho)^{-T} = (J_\rho)^{-T} + \mathcal O(|t\nabla h| + |t\nabla v|) .\]
Of course, we also have 
\[\det(Jac_\rho) =  h + \mathcal O(|t\nabla h| + |t\nabla v|).\]
Remember that the matrix $\mathcal A$ equals $\A = \B + \C = \B + \mathcal O(|\C|)$, so if we write $\B$ as the matrix by blocs
\begin{equation} \label{defBi} 
\B = \begin{pmatrix} B_1 & B_2 \\ B_3 & b \end{pmatrix},
\end{equation}
then
\begin{equation} \label{Arho}
\begin{split}
\A_{\rho}
& = h \begin{pmatrix} I & -h^{-1}v^T \\ 0 & h^{-1} \end{pmatrix} \begin{pmatrix} B_1 \circ \rho & B_2 \circ \rho \\ B_3 \circ \rho & b \circ \rho \end{pmatrix} \begin{pmatrix} I & 0 \\ -h^{-1}v & h^{-1} \end{pmatrix} + \mathcal O(|\C\circ \rho| + |t\nabla h| + |t\nabla v|)\\
& =  \begin{pmatrix} * & B_2\circ \rho - (b\circ \rho) h^{-1}v^T \\ B_3\circ \rho - (b\circ \rho) h^{-1}v & (b\circ \rho) h^{-1} \end{pmatrix} + \mathcal O(|\C \circ \rho| + |t\nabla h| + |t\nabla v|) \\
& = \underbrace{\begin{pmatrix} * & B_2 - b h^{-1}v^T \\ B_3 - b h^{-1}v & bh^{-1} \end{pmatrix}}_{\B_{\rho}}  + \mathcal O(|\B\circ \rho - \B| + |\C \circ \rho| + |t\nabla h| + |t\nabla v|),
\end{split}\end{equation}
where the top left corner of $\B$ is not given to lighten the computations, but depends only on products, quotients, differences, and sums of coefficients of $\B$, $h$, and $v$. We write $\C_{\rho}$ for $\A_{\rho} - \B_{\rho}$, and we have that
\begin{equation} \label{defCrho}
 |\C_\rho| \lesssim |\B\circ \rho - \B| + |\C\circ \rho| + |t\nabla h| + |t\nabla v|. 
\end{equation}

\medskip

\noindent {\bf Step 2:} In the change of variable $\rho_{v,h}$ constructed in Step 1, we really want to take $h:=b$ and $v:=B_3$. However, with those choices, nothing guarantees that \eqref{condh} is satisfied ... and we may not have a change of variable.
Still, we can see that it works when $t\nabla b$ and $t\nabla B_3$ are small enough. So the strategy will be to do multiple changes of variables, each of them canceling only a small portion of $b$ and $B_3$, but altogether reducing $b$ to 1 and $B_3$ to 0.

For this, we want to do an induction on the size of $t\nabla b$ and $t\nabla B_3$. The issue is that we do not have a control on those terms, hence no control on the number of change of variable that we need (so ultimately no control on the Carleson constant $M'$ in the conclusion of the Proposition). That is a bit annoying, so we need the following observation: without loss of generality, we can assume that 
\begin{equation} \label{Bellip}
\text{$\B$ satisfies \eqref{defellip} with the constant $C_A$}
\end{equation}
and
\begin{equation} \label{tNB<CA}
\|t\nabla \B\|_\infty \leq C C_A,
\end{equation}
where $C>0$ depends only on the dimension (via our choice for $\theta$ below). 
To prove this, we shall construct $\wt \B$ which satisfies \eqref{Bellip}, \eqref{tNB<CA}, and $|t\nabla \wt \B| \in CM_{sup}(CM)$ such that $\wt \C:= \A-\wt \B \in CM_{sup}(CM)$.

We choose once for all the proof a bump function $\theta \in C^\infty_0(\R^n)$ supported in $B(0,1/2)$, that is $0 \leq \theta \leq 1$ and $\iint_{\R^n} \theta \, dX = 1$. We construct $\theta_{y,t}(z,s) = t^{-n}\theta\big(\frac{z-y}{t},\frac{s-t}{t}\big)$, which satisfies $\iint_{\R^n} \theta_{y,t} = 1$. Note, if $\Theta(X) := X\theta(X)$, that 
\begin{equation} \label{difftheta}
t\nabla_{y,t} \theta_{y,t}(z,s) = - t^{-n} \nabla \theta\Big( \frac{z-y}{t},\frac{s-t}{t} \Big) - t^{-n} \div(\Theta)\Big( \frac{z-y}{t},\frac{s-t}{t} \Big),
\end{equation}
in particular $\|t\nabla_{y,t} \theta_{y,t}\|_{L^1} \leq C_\theta$.
We define
\begin{equation} \label{defwtB}
\wt{\B}(y,t) := \iint_{\R^n} \A(z,s)  \, \theta_{y,t}(z,s) \, dz\, ds.
\end{equation}
Since $\wt{\mathcal B}$ is an average of $\A$, then having \eqref{Bellip} for $\wt \B$ is immediate. 
Observe that
\begin{equation}
|t \nabla \wt{\B}(y,t)|  \leq \||\A|\|_\infty \iint_{\R^n} |t\nabla_{y,t}\theta_{y,t}(z,s)| \, dz\, ds \lesssim C_A.
\end{equation}
Moreover, using that $\A = \B + \C$ in \eqref{defwtB}, we have by \eqref{difftheta} that
\[ \begin{split}
|t \nabla \wt{\B}(y,t)| & \leq t^{-n} \left| \iint_{\R^n} [\B(z,s) + \C(z,s)] \left[ \nabla \theta\Big( \frac{z-y}{t},\frac{s-t}{t} \Big) + \div(\Theta)\Big( \frac{z-y}{t},\frac{s-t}{t} \Big) \right] \, dz\, ds  \right| \\
& \leq t^{-n}\iint_{\R^n} |t\nabla \B(z,s)| \left|\theta\Big( \frac{z-y}{t},\frac{s-t}{t} \Big)  \right| \, dz\, ds   \\
& \qquad + t^{-n} \iint_{\R^n} |\C(z,s)| \left| \nabla \theta\Big( \frac{z-y}{t},\frac{s-t}{t} \Big) + \div(\Theta)\Big( \frac{z-y}{t},\frac{s-t}{t} \Big) \right| \, dz\, ds \\ 
& \lesssim \sup_{\begin{subarray}{c} z\in B(y,t) \\ t/2<s<2t \end{subarray}} |t\nabla \B(z,s)| + |\C(z,s)| 
\end{split} \]
where, for the second inequality, we used an integration by part to move the divergence and the gradient from $\theta$ and $\Theta$ to $\B$. We conclude that $t\nabla \wt\B \in CM_{sup}$ as desired.
It remains to verify that $\wt \C \in CM_{sup}$. Indeed,
\[ \begin{split}
|\wt \C(y,s)| & = |\A(y,t) - \wt\B(y,t) | = \iint_{\R^n} \Big[|\B(y,t)-\B(z,s)| + |\C(y,t)| + |\C(z,t)| \Big] \, \theta_{y,t}(z,s) \, dz\, ds  \\
& \lesssim \sup_{\begin{subarray}{c} z\in B(y,t) \\ t/2<s<2t \end{subarray}} |\B(y,t)-\B(z,s)| + |\C(z,s)| \lesssim \sup_{\begin{subarray}{c} z\in B(y,t) \\ t/2<s<2t \end{subarray}} |t\nabla \B(z,s)| + |\C(z,s)| \in CM_{sup}.
\end{split} \]
Step 2 follows.

\medskip

\noindent {\bf Step 3:} We write $\B$ as the matrix by blocs
\begin{equation} \label{defBi} 
\B = \begin{pmatrix} B_1 & B_2 \\ B_3 & b \end{pmatrix},
\end{equation}
where $b$ is a scalar function, so $B_1$ is a matrix of order $n-1$, $B_2$ and $B_3$ are respectively a vertical and a horizontal vector of length $n-1$. According to Step 2, we can assume that $\B$ satisfies \eqref{Bellip} and \eqref{tNB<CA}, so
\begin{equation} \label{tNb<CA}
\|b\|_\infty + \|b^{-1}\|_\infty + \|\B_3\|_\infty + \|t\nabla b\|_\infty + \|t\nabla B_3\|_\infty \lesssim 1,
\end{equation}
where the constants depends only on $n$ and $C_A$. We want to use the change of variable \eqref{defrho12} with $h=b^{1/N}$ and $v_k = \frac1N b^{-k/N}B_3$ ($0 \leq k \leq N$) for a large enough $N$. So we need to verify \eqref{condh}. We have
\[\begin{split}
\|t \nabla h\|_\infty \lesssim \frac1N \|b^{-1}\|_\infty \|t\nabla b\|_\infty \lesssim \frac1N
\end{split}\]
by \eqref{tNb<CA}. Moreover, similar computations yield, for $0 \leq k \leq N$,
\[\begin{split}
\|t \nabla v_k\|_\infty & \lesssim \frac1N (\|b^{-2}\|_\infty \|B_3\|_\infty \|t\nabla b\|_\infty + \|b^{-1}\|_\infty \|t\nabla B_3\|_\infty)  \lesssim  \frac1N.
\end{split}\]
Altogether, we have for any $0 \leq k \leq N$
\begin{equation} \label{condhZ}
\|t \nabla h\|_\infty  + \|t \nabla v_k\|_\infty \leq \frac{C(C_A,n)}N < \epsilon_0
\end{equation}
for some large enough $N$ that depends only on $C_A$ and $n$, that we fix for the rest of the proof. By possibly taking $N$ slightly bigger (still depending only on $C_A$ and $n$), we can even assume that $N$ is such that
\begin{equation} \label{condNb}
\frac12 < h < 2 \quad \text{ and } \quad \sup_{0\leq k \leq N} |v_k| \leq 1
\end{equation}
to simplify the incoming computations.

\medskip

By \eqref{condhZ}, the map $\rho_{N-1}$ constructed in \eqref{defrho12} with of $h$ and $v_{N-1}$ is a bi-Lipschitz change of variable. Therefore, thanks to \eqref{Arho} and \eqref{defCrho}, the conjugate operator $L_{N-1} = -\diver \A_{N-1} \nabla$ of $L$ by $\rho_{N-1}$ is such that $\A_{N-1}$ can be decomposed as
\[\A_{N-1}:= \underbrace{\begin{pmatrix} * & * \\ \frac{N-1}{N} B_3 & b^{(N-1)/N} \end{pmatrix}}_{\B_{N-1}} + \ \C_{N-1}\] 
where $|t\nabla \B_{N-1}| \in CM_{sup}$ (because $\B_{N-1}$ is the product, quotient, difference or sum of coefficients of $\B$) and $\C_{N-1} \in CM_{sup}$ as well because
\[\begin{split}
|\C_{N-1}| \lesssim |\B\circ \rho - \B| + |\C\circ \rho| + |t\nabla h| + |t\nabla v|
\end{split}\]
by \eqref{defCrho},
\[|t\nabla h| + |t\nabla v| \lesssim |t\nabla \B| \in CM_{sup},\]
and since $\rho(y,t) \in B(y,t) \times (t/2,2t)$ by \eqref{condNb},
\[|\C\circ \rho(y,t)| + |\B\circ \rho(y,t) - \B(y,t)| \lesssim \sup_{\begin{subarray}{c} |z-y|< t \\ t/2 < s < 2t \end{subarray}} \Big[ |\C(z,s)| + |t\nabla \B(z,s)|  \Big] \in CM_{sup}.\]
We iterate the process, and so the conjugate operator $L_{k-1}$ of $L_k$ by $\rho_{k-1}$ (constructed with $h$ and $v_{k-1}$) is $-\diver \A_{k-1} \nabla$ where $\A_{k-1}$ can be decomposed as
$\A_{k-1} = \B_{k-1} + \C_{k-1}$ where
\[\B_{k-1} = \begin{pmatrix} * & * \\ \frac{k-1}{N} B_3 & b^{(k-1)/N} \end{pmatrix}, \]
and $|\mathcal C_{k-1}| + |t\nabla \B_{k-1}| \in CM_{sup}$. The proposition follows when we reach $k=0$.

\section{Applications}

\subsection{Regularity problem}

The first applications that we can think of are related to the Dirichlet and regularity problems, that we shall now introduce properly. 

Given a uniformly elliptic operator $L$ on $\R^n_+$, we can construct an associated elliptic measure $\{\omega^X_L\}_{X\in \R^n_+}$ on the boundary $\R^{n-1}$. That is, for any $g\in C_c(\R^{n-1})$ - the set of continuous and compactly supported functions on $\R^{n-1}$ - the function $u_g$ defined for all $X\in \R^n_+$ as 
\begin{equation} \label{defug}
u_g(X) := \int_{\R^{n-1}} g(y) \, d\omega^X_L(y) \quad \text{ for } X\in \R^n_+
\end{equation}
is the (unique) solution to $Lu = 0$ which is bounded, continuous up to the boundary, and satisfies $u=g$ on the boundary $\R^{n-1}$. We can use the construction \eqref{defug} when $g \in L^\infty(\R^{n-1})$ or even $BMO(\R^{n-1})$. In those cases $u_g$ will still be a solution to $Lu=0$ but the fact that $u=g$ on $\R^{n-1}$ has to be taken in a weaker sense (for instance non-tangential limit almost everywhere). But we  cannot guarantee that \eqref{defug} makes sense if $g$ lies only in $L^p$ for $p<\infty$. This issue is called the Dirichlet problem.

\begin{definition}[Dirichlet problem] \label{defDp}
We say that the Dirichlet problem for $L$ is solvable in $L^p$ if there exists a constant $C>0$ such that for any $g\in C_c(\R^{n-1})$, the solution $u_g$ constructed in \eqref{defug} satisfies
\begin{equation} \label{Nu<g}
\|N(u_g)\|_p \leq C \|g\|_p,
\end{equation}
where $N$ is the non-tangentially maximal function defined as 
\begin{equation} \label{defN}
N(u)(x) := \sup_{(y,t)\in \Gamma(x)}  u
\end{equation}
and $\Gamma(x):= \{(y,t) \in \R^n_+, \, |x-y|< t\}$ is the vertical cone with vertex in $x$ and aperture $1$.
\end{definition}

When the Dirichlet problem is solvable in the sense given in the above definition, then we can use the density of $C_c(\R^{n-1})$ in $L^p$ and \eqref{Nu<g} to say that a solution can be constructed using \eqref{defug} for any $g\in L^p(\R^{n-1})$, hence proving the existence of solutions for any data in $L^p$. 

The regularity problem is similar to the Dirichlet problem, but on the gradients. 

\begin{definition}[Regularity problem] \label{defRq}
We say that the regularity problem for $L$ is solvable in $L^q$ if there exists a constant $C>0$ such that for any $g\in C_c(\R^{n-1})$, the solution $u_g$ constructed in \eqref{defug} satisfies
\begin{equation} \label{NNu<Ng}
\|\wt N(\nabla u_g)\|_q \leq C \|\nabla g\|_q,
\end{equation}
where $\wt N$ is the averaged non-tangentially maximal function defined as 
\begin{equation} \label{defwtN}
\wt N(\nabla u)(x) := \sup_{(y,t)\in \Gamma(x)}  \left(\fint_{z\in B(y,t)} \fint_{t/2<s<2t} |\nabla u|^2 \, ds\, dz\right)^{\frac12}.
\end{equation}
Such averaged version of $N$ is needed here because, contrary to $u_g$, the quantity $\nabla u_g$ is not necessarily locally bounded.
\end{definition}

A first result on the regularity problem, which is key to our argument, is:

\begin{proposition} \label{Prrhostab}
The regularity problem in $L^q$ is stable under a bi-Lipschitz change of variable $\rho: \, \R^n_+ \to \R^{n}_+$ that fixes the boundary $\partial \R^n_+ = \R^{n-1}$. 

In addition, the ratio between the constants in \eqref{NNu<Ng} before and after the change of variable depends only on the dimension $n$ and the bi-Lipschitz constants of $\rho$. 
\end{proposition}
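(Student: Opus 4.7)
The plan is to set $v_g := u_g \circ \rho$, where $u_g$ is the solution to $Lu = 0$ with boundary data $g$. Since $\rho$ fixes $\R^{n-1}$ pointwise, $u_g \circ \rho$ has boundary data $g$, and since it is a bounded continuous solution to $L_\rho v = 0$, uniqueness identifies it with the canonical solution $v_g$ for $L_\rho$. The chain rule gives $\nabla v_g(y,t) = Jac_\rho(y,t)^T \cdot (\nabla u_g)(\rho(y,t))$, so the bi-Lipschitz character of $\rho$ yields the pointwise bound $|\nabla v_g(y,t)| \lesssim |(\nabla u_g)(\rho(y,t))|$ with constant depending only on the Lipschitz norm of $\rho$.

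Next I would compare the averaged non-tangential maximal functions. Fix $x \in \R^{n-1}$ and a Whitney box $W(y,t) := B(y,t) \times (t/2, 2t)$ with $(y,t) \in \Gamma(x)$. Combining the pointwise bound above with the change of variable $(w,r) = \rho(z,s)$, whose Jacobian determinant is uniformly bounded above and below, gives
\begin{equation*}
\fint_{W(y,t)} |\nabla v_g|^2 \, dz\, ds \lesssim \frac{1}{t^n} \int_{\rho(W(y,t))} |\nabla u_g(w,r)|^2 \, dw\, dr.
\end{equation*}
The key geometric observation is that, since $\rho$ is bi-Lipschitz and fixes the boundary, the distance to $\R^{n-1}$ is preserved up to a constant; hence there is $K = K(n,\rho)$ such that $\rho(W(y,t)) \subset B(y', Kt) \times (t'/K, Kt')$, where $(y',t') := \rho(y,t)$. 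This region can be covered by a uniformly bounded number of standard Whitney boxes $W(y_j', t_j')$ with $t_j' \approx t$, and using $\rho(x,0) = (x,0)$ one checks $|y_j' - x| \lesssim t_j'$, so each $(y_j', t_j')$ lies in the enlarged cone $\Gamma^C(x)$ for some $C = C(n,\rho)$. We conclude the pointwise bound
\begin{equation*}
\wt N(\nabla v_g)(x) \lesssim \wt N^C(\nabla u_g)(x),
\end{equation*}
where $\wt N^C$ is the analogue of $\wt N$ with cones of aperture $C$.

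Finally I would invoke the classical equivalence of $L^q$ norms of averaged non-tangential maximal functions with different apertures, which gives $\|\wt N^C(\nabla u_g)\|_q \lesssim \|\wt N(\nabla u_g)\|_q$ with constant depending only on $n$, $q$, and $C$. Combined with the solvability hypothesis for $L$, this yields $\|\wt N(\nabla v_g)\|_q \lesssim \|\nabla g\|_q$, proving solvability for $L_\rho$. The reverse implication is obtained by applying the same argument to $\rho^{-1}$, which is itself bi-Lipschitz and fixes the boundary. The main technical care lies in the bookkeeping: checking that $\rho$ maps each Whitney box into a region coverable by finitely many standard Whitney boxes of comparable height, and tracking how the cone aperture inflates — both depend only on the Lipschitz constants of $\rho$, which gives the quantitative dependence claimed in the statement.
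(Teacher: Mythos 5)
Your proof follows essentially the same route as the paper's: identify $v_g = u_g\circ\rho$ via uniqueness, use the chain rule and bi-Lipschitz bound on the Jacobian to pass to a change of variables in the Whitney average, cover the image $\rho(W(y,t))$ by boundedly many Whitney boxes inside an enlarged cone $\Gamma_*(x)$ (using $\rho(x)=x$), and conclude by the classical $L^q$-equivalence of averaged non-tangential maximal functions over cones of different apertures. The argument and its quantitative bookkeeping match the paper's proof step for step.
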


\begin{proof}
Let $L$ be a uniformly elliptic operator and $L_\rho$ be its conjugate by $\rho$. By uniqueness of the harmonic measure, we have $\omega_{L_\rho}^{X} = \omega_L^{\rho(X)}$, thus the solutions $u_g$ and $v_{g}$ to respectively $Lu =0$ and $L_\rho v = 0$ constructed by \eqref{defug} verify $v_g = u_g \circ \rho$.

From there, we only need to show that $\|\wt N(\nabla u)\|_q \approx \|\wt N(\nabla [u\circ \rho])\|_q$, and even
\begin{equation} \label{claimlast}
\|\wt N(\nabla [u\circ \rho])\|_q \lesssim \|\wt N(\nabla u)\|_q .
\end{equation}
because the proof of other bound is identical by using $\rho^{-1}$ instead of $\rho$. 

Let $x\in \R^{n-1} = \partial \R^n_+$, and take $(y,t) \in \Gamma(x)$. The map $\rho$ is bi-Lipschitz, so the Jacobian matrix $\Jac_\rho$ and $1/\det(\Jac_\rho)$ are uniformly bounded. Therefore, 
\begin{multline*}
\fint_{z\in B(y,t)} \fint_{t/2<s<2t} |\nabla[u \circ \rho]|^2 \, ds\, dz  = \fint_{z\in B(y,t)} \fint_{t/2<s<2t} |\Jac_\rho (\nabla u) \circ \rho|^2 \, ds\, dz \\
 \lesssim \fint_{z\in B(y,t)} \fint_{t/2<s<2t} |(\nabla u) \circ \rho|^2 \, ds\, dz \lesssim t^{-n} \iint_{\rho(B(y,t) \times (t/2,2t))} |\nabla u|^2 \, ds' \, dz'\\
 \lesssim \sup_{(y',t') \in \rho(B(y,t) \times (t/2,2t))} \fint_{z\in B(y',t')} \fint_{t'/2<s<2t'} |\nabla u|^2 \, ds\, dz,
\end{multline*}
where the last line holds because we can cover the image $\rho(B(y,t) \times (t/2,2t))$ by at most $N$ sets of the form $B(y',t') \times (t'/2,2t')$, with $N$ depending only on the bi-Lipschitz constants of $\rho$. Since $\rho(x) = x$ (and again $\rho$ is bi-Lipschitz), we also know that the image $\rho(B(y,t) \times (t/2,2t))$ is included in the cone $\Gamma_*(x):= \{(y',t') \in \R^n_+, \, |y'-x|<K_*t'\}$ with a large enough aperture $K_*$ that depends only on the bi-Lipschitz constants of $\rho$.  We deduce that 
\[\wt N(\nabla [u\circ \rho])(x) \lesssim \wt N_*(\nabla u)(x),\]
where $\wt N_*(\nabla u)(x)$ is defined with the cone $\Gamma_*(x)$ instead of $\Gamma(x)$, and hence  
\[\|\wt N(\nabla [u\circ \rho])\|_q \lesssim \|\wt N_*(\nabla u)\|_q.\]
The claim \eqref{claimlast} is now a consequence of $\|\wt N_*(\nabla u)\|_q \approx \|\wt N(\nabla u)\|_q$, which is a classical result in real analysis, see for instance Chapter II, equation (25) of \cite{Stein93}.
\end{proof}

The regularity problem is solvable for the Laplacian on $\R^{n}_+$ and Carleson perturbations preserve the solvability of the regularity problem in $L^q$ for small $q>1$, as shown in \cite[Theorem 2.1]{KP95}, so we have

\begin{theorem} \label{RpTh}
If $\C$ is a matrix function on $\R^{n}_+$ such that $\C \in CM_{sup}$ and the operator $L:=-\diver(I+\C)\nabla$ is uniformly elliptic - see \eqref{defellip} - then there exists a $q>1$ such that the regularity problem for $L$ is solvable in $L^q$.
\end{theorem}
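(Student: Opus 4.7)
The plan is to obtain Theorem \ref{RpTh} as a direct application of the Carleson perturbation theory for the regularity problem developed in \cite[Theorem 2.1]{KP95}, using the Laplacian $L_0 := -\Delta$ as the unperturbed reference operator. Observe that Proposition \ref{Main2} is not explicitly needed for this particular statement: the coefficient matrix $\A = I + \C$ is already presented as a perturbation of the identity, whose last row $(0,\dots,0,1)$ is already in the ``good'' form. (This stands in contrast with Theorem \ref{DpTh} and Corollary \ref{RpCor}, where the change of variable is genuinely required.)

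First I would recall the classical fact that the regularity problem for $L_0 := -\Delta$ on the upper half-space $\R^n_+$ is solvable in $L^q$ for every $q \in (1,\infty)$; this follows, for instance, from the $L^q$-boundedness of the Riesz transforms applied to the Poisson extension formula, or equivalently from standard tangential/normal derivative estimates for harmonic extensions of $L^q$-data. Next, since $\A = I + \C$ differs from the identity by $\C$, the natural Carleson perturbation function is
\[a(y,t) := \sup_{(z,s) \in B((y,t),\,t/2)} |\A(z,s) - I| = \sup_{(z,s) \in B((y,t),\,t/2)} |\C(z,s)|,\]
which is comparable to $|\C|_{sup}(y,t)$. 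The hypothesis $\C \in CM_{sup}$ thus says exactly that $a^2 \, t^{-1} \, dt \, dy$ is a Carleson measure on $\R^n_+$, which is the standard form of the Carleson perturbation condition in \cite{FKP, KP95}.

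Together with the uniform ellipticity of $L$, which is assumed, all hypotheses of \cite[Theorem 2.1]{KP95} are satisfied, and that theorem produces some $q > 1$ (possibly close to $1$) for which the regularity problem for $L$ is solvable in $L^q$. The only substantive work in this argument is a verification of conventions between our $CM_{sup}$ condition and the exact form of Carleson perturbation used in \cite{KP95}; since both involve a supremum over Whitney-type boxes together with a Carleson measure condition in $t^{-1}\,dt\,dy$, this comparison is routine and is expected to be the whole content of the proof.
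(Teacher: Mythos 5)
Your proposal matches the paper's argument exactly: the paper states Theorem \ref{RpTh} as an immediate consequence of the solvability of the regularity problem for the Laplacian on $\R^n_+$ combined with the Carleson perturbation result \cite[Theorem 2.1]{KP95}, which is precisely your route. You also correctly observe that Proposition \ref{Main2} plays no role here, since $I+\C$ already has the desired form; the change of variable only enters later, in the proof of Corollary \ref{RpCor}.
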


We shall prove the following corollary.

\begin{corollary} \label{RpCor}
Let $b$ be a scalar function on $\R^n_+$ which satisfies  $C_b  \leq b \leq C_b$ and $|t\nabla b| \in CM_{sup}(M)$ for some positive constants $C_b$ and $M$. We construct the operator $L:=-\div \B \nabla$, where 
\[\B := \begin{pmatrix} b^{-1} I & 0 \\ 0 & b \end{pmatrix}.\]

Then there exists $q>1$ that depends only on $C_b$ and $M$ such that the regularity problem for $L$ is solvable in $L^q$.
\end{corollary}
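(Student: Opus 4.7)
The plan is to apply the construction in the proof of Proposition \ref{Main2} to our specific $\B$, producing a bi-Lipschitz change of variable $\rho$ for which the conjugate operator has the form $L_\rho = -\div(I + \C)\nabla$ with $\C \in CM_{sup}$. Then Theorem \ref{RpTh} gives solvability of the regularity problem for $L_\rho$ in $L^q$ for some $q > 1$, and Proposition \ref{Prrhostab} transfers this back to $L$.

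The key observation is that in the decomposition \eqref{defBi}, our matrix has $B_1 = b^{-1} I$, $B_2 = 0$, $B_3 = 0$, so I can take $v_k = 0$ throughout the iteration and reduce $h$ to the scalar $b^{1/N}$. The condition \eqref{condh} follows from $|t\nabla b| \in CM_{sup} \subset L^\infty$ and the two-sided bounds on $b$, with $N$ depending only on $C_b$ and $M$. Plugging $v = 0$ and $B_2 = B_3 = 0$ into \eqref{Arho}, one iteration gives
\[\A_\rho = \begin{pmatrix} h B_1 & 0 \\ 0 & h^{-1} b \end{pmatrix} + \C_1' = \begin{pmatrix} b^{-(N-1)/N} I & 0 \\ 0 & b^{(N-1)/N} \end{pmatrix} + \C_1',\]
with $\C_1' \in CM_{sup}$ (from \eqref{defCrho} and the hypothesis on $\nabla \B$). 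The new main part has the same block-diagonal structure as $\B$ with $b$ replaced by $b^{(N-1)/N}$, so the iteration can be continued. After $k$ steps the main part of $\A_{\rho^{(k)}}$ is $\begin{pmatrix} b^{-(N-k)/N} I & 0 \\ 0 & b^{(N-k)/N} \end{pmatrix}$, and after $N$ steps it becomes exactly $I$. The accumulated Carleson error, being a finite sum of $CM_{sup}$ terms, remains in $CM_{sup}$.

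Thus $L_{\rho^{(N)}} = -\div(I + \C)\nabla$ with $\C \in CM_{sup}$. Theorem \ref{RpTh} yields a $q > 1$, depending only on $C_b$ and $M$, such that the regularity problem for $L_{\rho^{(N)}}$ is solvable in $L^q$. Proposition \ref{Prrhostab} then transfers this to $L$, with constants depending only on $C_b$, $M$, and $n$. The main subtlety is the clean cancellation $\B_{\rho^{(N)}} = I$: Proposition \ref{Main2} alone only guarantees a $\B_\rho$ whose last row is $(0,\ldots,0,1)$, and we rely on the special structure $B_1 = b^{-1} I$ paired with the choice $h = b^{1/N}$ to additionally force the top-left block to become $I$ after $N$ steps.
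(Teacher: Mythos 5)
Your proposal is correct and follows essentially the same route as the paper: both take $v=0$, $h=b^{1/N}$, iterate the change of variable $N$ times so that the principal part becomes $I$ plus a $CM_{sup}$ error, and then invoke Theorem \ref{RpTh} and Proposition \ref{Prrhostab}. Your closing remark about why Proposition \ref{Main2} cannot be cited directly and the role of the special structure $B_1=b^{-1}I$ matches the paper's comment that the proof ``can be adapted'' rather than applied as a black box.
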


The corollary is not optimized at all, but the result is still new when $|t\nabla b| \in CM_{sup}(M)$ for a large Carleson constant $M$. So our method enlarge the class of operators for which the solvability of the regularity problem in $L^q$ is known.

\medskip

\noindent{ \em Proof of the corollary: }
We cannot use Proposition \ref{Main2} directly here, but we can adapt the proof to fit our situation. Let us sketch it.

\medskip

We shall use \eqref{defrho12} with $v=0$, so we have 
\begin{equation} \label{defrhoh}
\rho(y,t) := (y,h(y,t) t), \quad Jac_{\rho} = \begin{pmatrix} I  & t\nabla_x h \\ 0 & h +t\partial_t h \end{pmatrix}.
\end{equation}
If $2|t\nabla h| \leq h$, the map $\rho$ is a change of variable. Moreover, if $\A$ has the form 
\[\A := \begin{pmatrix} a^{-1} I & 0 \\ 0 & a \end{pmatrix}\]
then $\A_\rho:= \det(Jac_\rho) (Jac_\rho)^{-T} (\mathcal A \circ \rho) \, (Jac_\rho)^{-1}$ can be decomposed as
\begin{equation} \label{defArho2}
\A_\rho = \underbrace{\begin{pmatrix} ha^{-1} & 0 \\ 0 & h^{-1}a \end{pmatrix}}_{\B_\rho} + \ \mathcal O(|t\nabla h| + |a\circ \rho - a|).
\end{equation}
We write $\C_\rho$ for $\A_\rho - \B_\rho$. 

\medskip

We know that $|t\nabla b| \in CM_{sup}(M)$. Therefore $\|t\nabla b\|_\infty \lesssim M$ and there exists $N$ (depending only on $M$ and $C_b$) such that $\|t\nabla (b^{1/N})\|_\infty \leq C_b/2$. We use $\rho$ with $h= b^{1/N}$, which is a change of variable, so the conjugate of the operator $L$ by $\rho$ is the operator $L_1=-\diver \A_1 \nabla$ where \eqref{defArho2},
\[\A_1 = \underbrace{\begin{pmatrix} b^{-(N-1)/N} & 0 \\ 0 & b^{(N-1)/N} \end{pmatrix}}_{\B_1} + \ \C_1,\]
with 
\[|\C_1| \lesssim |t\nabla b| + |b\circ \rho - b| \lesssim |t\nabla b| \in CM_{sup}.
\]
By iteration, we find that the conjugate of $L$ by $\rho^k$ is $L_k=-\diver \A_k \nabla$ where $\A_k$ can be decomposed as
\[\A_k = \underbrace{\begin{pmatrix} b^{-(N-k)/N} & 0 \\ 0 & b^{(N-k)/N} \end{pmatrix}}_{\B_k} + \ \C_k,\]
where $|\C_k| \in CM_{sup}$. Taking $k=N$ yields that $|\A_N -I| \in CM_{sup}$ so Theorem \ref{RpTh} entails that there exists $q>1$ such that the regularity problem for $L_N$ is solvable in $L^q$. Proposition \ref{Prrhostab} concludes the proof.
\ep

\subsection{Dirichlet problem in higher codimension.}

As we said in remark after Proposition \ref{Main2}, our method can be adapted to degenerate elliptic operators on $\R^n\setminus \R^d:= \{(x,t) \in \R^d \times \R^{n-d}, \, t\neq 0\}$, $d<n-1$. In this domain, the `degenerate' uniformly elliptic operators are the ones that can be written as $L = - \diver[ |t|^{d+1-n} \A \nabla ]$, where $\A$ satisfies the classical elliptic condition \eqref{defellip}. A rich elliptic theory was developed in \cite{DFMprelim} by Guy David, Svitlana Mayboroda, and the author, and provides for the aforementioned operators a notion of elliptic measure similar to \eqref{defug}, and then a notion of Dirichlet and regularity problem. The Carleson measure condition \eqref{defCM} has also to be adapted, but the adaptation is fairly straightforward. 

Without entering into much details, we know from \cite{FMZ} and \cite{FenGinfty} that the Dirichlet problem is solvable in $L^p$ for sufficiently large $p$ when $\A$ can be written as $\A = \B + \C$ where $\B$ can be written as a bloc matrix in the form
\begin{equation} \label{decompB}
\B := \begin{pmatrix} B_1 & B_2 \\ B_3 & bI  \end{pmatrix}
\end{equation}
with $B_1$ being a $d\times d$ matrix, $(B_2)^T$ and $B_3$ being $(n-d) \times d$ matrices, $b$ being a scalar function, $I$ being the identity matrix on $\R^{n-d}$, and 
\begin{equation} \label{condB1}
|\C| + |t||\nabla b| + |t\nabla_x B_3| + |t|^{n-d} |\diver_t [|t|^{d+1-n} B_3]| \in CM_{sup}.
\end{equation}
We recall that $\diver_t A$ is a vector containing the divergence of each column of $A$. 
One can think of operators in the form $L=-\diver[|t|^{d+1-n} \B \nabla]$ with $\B$ as above to be the higher codimension analogue of the Dahlberg-Kenig-Pipher operators. Indeed, the proof of the solvability of the Dirichlet problem for those operators rely on methods similar to the ones used by Kenig-Pipher, in particular the use of a  transformation similar to \eqref{BtoBD}. 

By using our method based on a change of variable, the condition $|t|^{n-d}|\diver_t [|t|^{d+1-n} B_3]|$ would be replaced by the much nicer $|t||\nabla_t B_3|$, leading to the following alternative:

\begin{theorem} \label{DpTh}
Let $L=-\diver [|t|^{d+1-n} \A \nabla]$ be an operator for which $\A$ satisfies \eqref{defellip}. Assume that $\A$ can be decomposed as $\A = \B + \C$ where 
\[\B = \begin{pmatrix} B_1 & B_2 \\ B_3 & bI  \end{pmatrix}\]
and
\begin{equation} \label{condB2}
|\C| + |t||\nabla b| + |t\nabla B_3|  \in CM_{sup}.
\end{equation}
Then there exists a large $p< +\infty$ such that the Dirichlet problem for $L$ is solvable in $L^p$.
\end{theorem}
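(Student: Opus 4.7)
The plan is to mimic the proof of Corollary \ref{RpCor}: first adapt Proposition \ref{Main2} to $\R^n\setminus\R^d$ so as to kill $B_3$ and normalize $b$ to $1$ modulo a Carleson perturbation, then invoke the known Dirichlet solvability result from \cite{FMZ,FenGinfty} under condition \eqref{condB1}, and finally pull the result back via the bi-Lipschitz change of variable. The point of the reduction is that once $B_3\equiv 0$ in the resulting bloc form, the problematic term $|t|^{n-d}|\diver_t[|t|^{d+1-n}B_3]|$ in \eqref{condB1} is trivially zero, so the stronger hypothesis \eqref{condB1} is forced by the gentler condition \eqref{condB2} inherited by the conjugate matrix $\A_\rho$.

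For the change of variable I would take maps of the form
\[\rho(y,t):=\bigl(y+V(y,t)t,\ h(y,t)t\bigr),\]
with $y\in\R^d$, $t\in\R^{n-d}$, $V$ a $d\times(n-d)$ matrix function and $h>0$ a scalar function, so that $\rho$ fixes $\R^d=\{t=0\}$. The Jacobian differs from $J_\rho:=\begin{pmatrix}I&V\\0&hI\end{pmatrix}$ by an $\mathcal O(|t\nabla V|+|t\nabla h|)$ error, and since the weight transforms as $|t|^{d+1-n}\mapsto h^{d+1-n}|t|^{d+1-n}$ under $\rho$, the conjugate matrix becomes
\[\A_\rho=h^{d+1-n}\det(Jac_\rho)\,(Jac_\rho)^{-T}(\A\circ\rho)(Jac_\rho)^{-1}.\]
I would then follow Steps 2 and 3 of Proposition \ref{Main2} almost verbatim: mollify $\B$ to gain the pointwise bound $|t\nabla\B|\lesssim 1$, then iterate the change of variable $N$ times with $h=b^{1/N}$ and $V$ taken to be a suitably rescaled version of $B_3^T$, so that after $N$ steps the bottom-left bloc has been driven to $0$ and the bottom-right to $I$, while $|t\nabla\B_\rho|+|\C_\rho|$ remains in $CM_{sup}$ with a quantitatively controlled constant.

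Once $\A_{\rho^N}=\B_{\rho^N}+\C_{\rho^N}$ has this simplified form, condition \eqref{condB1} holds automatically and the main results of \cite{FMZ,FenGinfty} yield a large $p<\infty$ for which the Dirichlet problem for $L_{\rho^N}$ is solvable in $L^p$. To transfer this solvability to $L$ itself I would prove a direct analogue of Proposition \ref{Prrhostab} for the ordinary non-tangential maximal function $N(u)$ in place of $\wt N(\nabla u)$: the identity $\omega_{L_\rho}^X=\omega_L^{\rho(X)}$ gives $u_g^{L_{\rho^N}}=u_g^L\circ\rho^N$ via the boundary-fixing property of $\rho^N$, and the bi-Lipschitz nature of $\rho^N$ reduces $\|N(u\circ\rho^N)\|_p\approx\|N(u)\|_p$ to the classical cone-aperture invariance of $N$ (Chapter II of \cite{Stein93}), exactly as in the proof of Proposition \ref{Prrhostab}.

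The main obstacle is the higher-codimension version of Step 1 of Proposition \ref{Main2}: the bloc form \eqref{decompB} requires that the bottom-right entry of $\B_\rho$ be a scalar times $I_{n-d}$, whereas after conjugation this bloc picks up $h^{-1}(bI-V^T B_2-B_3 V+V^T B_1 V)$, which is generically a full $(n-d)\times(n-d)$ matrix. This issue is invisible in codimension one (where $(n-d)\times(n-d)=1\times 1$), but here one must verify that, at each step of the iteration, the deviation of that bloc from a scalar-times-identity matrix is pointwise dominated by a $CM_{sup}$ function that can be absorbed into $\C_\rho$; this relies on $V$ being $1/N$ times a quantity controlled by the Carleson data of $B_3$ and $b$, combined with a careful bookkeeping of the weight $|t|^{d+1-n}$ in the divergence structure. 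Everything else (the mollification trick, the iteration count, the pull-back of the Dirichlet problem) transfers mechanically from the codimension-one proof.
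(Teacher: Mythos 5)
Your overall strategy coincides with the paper's: transpose $\rho(y,t)=(y+tv,ht)$ to codimension $n-d$, mollify $\B$ to get a pointwise bound on $t\nabla\B$ so the number of iterations is controlled, iterate $N$ times with $h=b^{1/N}$ and $v$ a rescaled copy of $B_3$ to drive the bottom row of $\B_\rho$ to $(0\ \ I)$, observe that \eqref{condB1} then holds trivially so the results of \cite{FMZ,FenGinfty} apply, and pull the Dirichlet solvability back through the bi-Lipschitz map by the analogue of Proposition~\ref{Prrhostab} for the ordinary nontangential maximal function. This is exactly what the paper does.

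The ``main obstacle'' you single out, however, does not exist; it comes from a sign/orientation error in the Jacobian. For $\rho(y,t)=(y+tv,ht)$, the $t$-component of $\rho$ is $ht$, whose derivative with respect to $y$ is $\mathcal O(|t\nabla h|)$ and hence negligible; the $v$-dependent bloc sits only in the corner coming from $\partial_t(y+tv)\approx v$. Thus the approximate Jacobian $J_\rho$, in the convention $\nabla(u\circ\rho)=Jac_\rho\,(\nabla u)\circ\rho$ used by the paper, is triangular with a zero bloc on the side opposite to $v$,
\begin{equation*}
J_\rho=\begin{pmatrix}I&0\\ v & hI\end{pmatrix},
\end{equation*}
so $J_\rho^{-T}$ and $J_\rho^{-1}$ are triangular in the complementary ways, and the bottom-right bloc of $J_\rho^{-T}(\B\circ\rho)J_\rho^{-1}$ is simply $h^{-1}\cdot bI\cdot h^{-1}=bh^{-2}I$, which after the factor $h^{d+1-n}\det(Jac_\rho)\approx h$ gives $bh^{-1}I$, cleanly scalar times identity, exactly as in \eqref{ArhoY}. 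The cross terms $v^TB_1v$, $v^TB_2$, $B_3v$ that you write down do appear, but in the top-left bloc only, where they are harmless; they would contaminate the bottom-right bloc only if the $v$-dependent bloc occupied the opposite off-diagonal corner of the Jacobian, which is not the case for this map. So no extra bookkeeping or absorption of a ``deviation from scalar times identity'' into $\C_\rho$ is needed. The one genuine constraint that the higher-codimensional weight does impose is the one the paper records in the closing remark: the bottom-right bloc of the Jacobian must be of the form $hI$ with $h$ a scalar, so that $(|Ht|/|t|)^{d+1-n}$ cancels against $\det(Jac_\rho)$; since your $h$ is already scalar, this is automatic.
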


\begin{remark}
Both \eqref{condB1} and \eqref{condB2} are conditions that can appear naturally. The condition \eqref{condB1} is the one that we would get by operators constructed from DKP operators as in \cite[(4.6)]{DFMKenig}, while the condition \eqref{condB2} can appear when we flatten the graph of Lipschitz function in higher codimension (see Remark 1.25 in \cite{FenGinfty}).
\end{remark}

\bp The proof of the above result is not much different from the proof of Proposition \ref{Main2}, but let us highlight the small changes. We use the maps $\rho$ defined as
\[\rho(y,t) := (y + t v(y,t) , h(y,t) t)\]
where $v(y,t)$ is a $(n-d) \times d$ matrix and $h$ is a scalar function (we see values in $\R^d$ and $\R^{n-d}$ as horizontal vectors). As before, the map $\rho$ is a change of variable if $|t||\nabla v| + |t||\nabla h|$ is small enough.
The conjugate operator $L_\rho$ is such that $L_\rho:= -\diver[|t|^{d+1-n} \A_\rho \nabla]$ where 
\begin{equation} \label{defArhoLD}
\A_{\rho} = h^{d+1-n} \det(Jac_\rho) (Jac_\rho)^{-T} (\mathcal A\circ \rho) \, (Jac_\rho)^{-1},
\end{equation}
the extra term $h$ being due to the weight $|t|^{d+1-n}$. But it is fine because it will cancel out with $\det(Jac_\rho) \approx h^{n-d}$ to give, similarly to the codimension 1 case, that 
\begin{equation} \label{ArhoY}
\begin{split}
\A_{\rho} = \underbrace{\begin{pmatrix} * & * \\ B_3 - b h^{-1}v & bh^{-1} I \end{pmatrix}}_{\B_{\rho}}  + \ \mathcal O(|\B\circ \rho - \B| + |\C \circ \rho| + |t||\nabla h| + |t||\nabla v|).
\end{split}\end{equation}
The rest of the proof does not change from Proposition \ref{Main2}, which gives the existence of a bi-Lipschitz change of variable $\rho'$ that fixes the boundary $\R^d$ and for which the matrix $\B_{\rho'}$ after the change of variable is in the form
\[\B_{\rho'} = \begin{pmatrix} * & * \\ 0 & I  \end{pmatrix},\]
and thus can be treated by the previous literature. Going back from the solvability of the Dirichlet problem for $L_{\rho'}$ to the one of $L$ is then similar to Proposition \ref{Prrhostab}.\ep

\begin{remark}
One may think that we can use a matrix $H$ of order $n-1$ - instead of the scalar $h$ - in the change of variable $\rho$ to try to reduce the bottom right corner of $\B$ to identity. But doing so would change the expression \eqref{defArhoLD} to
\[\A_{\rho} = \left(\frac{|Ht|}{|t|}\right)^{d+1-n} \det(Jac_\rho) (Jac_\rho)^{-T} (\mathcal A \circ \rho) \, (Jac_\rho)^{-1}\]
and $\left(\frac{|Ht|}{|t|}\right)^{d+1-n}$ is killed by $\det(Jac_\rho)$ only when $H$ has the form $H=hI$ - i.e. $H$ is the identity matrix times a scalar function\footnote{and in few other cases not worth mentioning because they are way more complicated.}.
Therefore, the bottom right corner of the Jacobian of the change of variable $\rho$ will always be close - up to Carleson measures - to a scalar function times identity, and can only cancel out terms in the same form. That is why, in Theorem \ref{DpTh}, we only consider a bottom right corner of $\B$ which is a scalar function times identity.
\end{remark}

\end{document}